\newtheorem{theorem}{Theorem}[section]
\newtheorem{lemma}[theorem]{Lemma}
\newtheorem{remark}[theorem]{Remark}
\newtheorem{question}[theorem]{Question}
\theoremstyle{plain}
\newtheorem*{theorem*}{Theorem}
\newtheorem*{question*}{Question}
\newtheorem*{example*}{Example}
\def\dashint{\,\ThisStyle{\ensurestackMath{%
			\stackinset{c}{.2\LMpt}{c}{.5\LMpt}{\SavedStyle-}{\SavedStyle\phantom{\int}}}%
		\setbox0=\hbox{$\SavedStyle\int\,$}\kern-\wd0}\int}
\newcommand{\rn}{\mathbb{R}^n}
\newcommand{\dyprodhp}{H_d^p(\mathbb{R}\otimes\mathbb{R})}
\newcommand{\dyprodhpn}[1]{\|#1\|_{\dyprodhp}}
\newcommand{\dyprodhq}{H_d^q(\mathbb{R}\otimes\mathbb{R})}
\newcommand{\dyprodhr}{H_d^r(\mathbb{R}\otimes\mathbb{R})}
\newcommand{\dyprodhrn}[1]{\|#1\|_{\dyprodhr}}
\newcommand{\dyprodbmo}{\text{BMO}_d(\mathbb{R}\otimes\mathbb{R})}
\newcommand{\dyprodbmon}[1]{\|#1\|_{\dyprodbmo}}
\newcommand{\avr}[2]{\left\langle#1\right\rangle_{#2}}
\newcommand{\lprtwo}{L^p(\mathbb{R}^2)}
\newcommand{\lprtwonorm}[1]{\|#1\|_{\lprtwo}}
\newcommand{\lrrtwo}{L^r(\mathbb{R}^2)}
\newcommand{\lrrtwonorm}[1]{\|#1\|_{\lrrtwo}}
\begin{document}
	
	\title{Notes on Bi-parameter Paraproducts}
	\author{Shahaboddin Shaabani}
	\address{Department of Mathematics\\University of Toronto}
	\email{shahaboddin.shaabani@utoronto.ca}	
	\keywords{Hardy spaces, Paraproducts}
	\subjclass{42B30, 42B35}
	\date{}
	\begin{abstract}
In this note, we investigate the sharpness of existing bounds for various types of bi-parameter paraproducts acting between product Hardy spaces in the dyadic setting. We show that these bounds are sharp in most cases but fail to be so in one particular instance.
	\end{abstract}
	\maketitle

	\section{Introduction}
	
This work is a sequel to our previous two papers on the boundedness properties of paraproducts \cite{MR4952996,MR4939674}. Here, we are concerned with sharpness of the existing bounds for the remaining dyadic bi-parameter paraproducts not studied in \cite{MR4952996}. For various applications and properties of paraproducts, we refer the reader to \cite{MR2682821,MR3375865,MR1853518,MR2730492,MR2134868,MR2320408,MR3052499,MR2313844}.\\ 

Let us begin by recalling the one-parameter dyadic paraproducts, three bilinear forms defined by
\begin{equation*}
	\pi^{\varepsilon}(f,g)(x):=\sum_{I\in \mathcal{D}} \avr{f, h_I^{\varepsilon_1}}{}\avr{g, h_I^{\varepsilon_2}}{} h_I^{\varepsilon_1+\varepsilon_2}(x), \quad \varepsilon=(\varepsilon_1,\varepsilon_2)\in \{0,1\}^2\backslash\{(0,0)\}, \quad x\in\mathbb{R}.
\end{equation*}
In the above, $I$ stands for a dyadic interval on the line and $\mathcal{D}$ for the collection of all such intervals. Also,  $h_I$ is the $L^2$-normalized Haar wavelet associated to $I$ and $h_I^\epsilon$ is given by
\[
h_I^{\epsilon}(x):= \begin{cases}
	h_I(x) & \epsilon=1\\
	\bar{\chi}_I(x) & \epsilon=0,
\end{cases} \quad x\in\mathbb{R}, 
\]
where $\bar{\chi}_I:=\frac{\chi_I}{|I|}$. Finally, the sum $\varepsilon_1+\varepsilon_2$ is understood in mode 2. It is then easy to see that for sufficiently nice functions we have
\begin{equation}\label{paraproductexpansioninR}
	f(x)g(x)=\sum_{\varepsilon\in\{0,1\}^2\backslash\{(0,0)\}} \pi^{\varepsilon}(f,g)(x), \quad x\in\mathbb{R}.
\end{equation}
To avoid unnecessary complications, throughout this note we make the qualitative assumption that all our functions, on either $\mathbb{R}$ or $\mathbb{R}^2$, are real-valued and simple, in the sense that they are finite linear combinations of characteristic functions of dyadic intervals or squares. Since none of the bounds depend on this a priori assumption, standard limiting arguments extend our results to the general case. See also \cite{MR4952996,MR4939674}, where a different approach has been taken.\\

Fairly well-known arguments show that, for $0<p<\infty$, the bilinear form $\pi^{(0,1)}$ is bounded from $H^p_d(\mathbb{R})\times BMO_d(\mathbb{R})$ and $L^\infty(\mathbb{R})\times H^p_d(\mathbb{R})$ to $H^p_d(\mathbb{R})$ \cite{MR2730492,MR3375865, MR2134868,MR2320408,MR1853518, MR3052499,MR2313844}. Here, for $0<p<\infty$, $H^p_d(\mathbb{R})$ denotes the dyadic Hardy space on the line, (quasi-)normed by
\[
\|f\|_{H^p_d(\mathbb{R})}:= \|M(f)\|_{L^p(\mathbb{R})}, \quad M(f)(x):=\sup_{x\in I}|\avr{f}{I}|, \quad x\in\mathbb{R},
\]
where we use $\avr{f}{E}$ for the average of $f$ over a measurable set $E$. Also, $BMO_d(\mathbb{R})$ denotes the space of functions with uniformly bounded mean oscillation on dyadic intervals, i.e.
\[
\|f\|_{BMO_d(\mathbb{R})}:= \sup_{I\in\mathcal{D}} \big(\frac{1}{|I|} \sum_{I'\subseteq I} \avr{f,h_{I'}}{}^2\big)^{\frac{1}{2}}<\infty.
\]
We refer the reader to \cite{MR1320508} for more on these spaces.\\

Because of the symmetry $\pi^{(0,1)}(f,g)=\pi^{(1,0)}(g,f)$, a similar result holds for $\pi^{(1,0)}$. In addition, both forms are bounded from $H^p_d(\mathbb{R})\times H^r_d(\mathbb{R})$ to $H^q_d(\mathbb{R})$, with $\frac{1}{q}=\frac{1}{p}+\frac{1}{r}$ and $0<r,p,q<\infty$. These two results also hold for $\pi^{(1,1)}$, but only in the reflexive range of exponents, i.e., where $1<r,p,q<\infty$. Regarding the sharpness of these boundedness properties, a natural approach is to freeze one of the inputs and study the operator norm of the resulting linear operators. Because of the symmetries, it is enough to study the linear operators
\begin{equation}\label{Defoneparameterparaproducts}
	\pi_g(f)= \sum_{I\in \mathcal{D}} \avr{f}{I} \avr{g,h_I}{} h_I, \quad
	\pi'_g(f)= \sum_{I\in \mathcal{D}} \avr{f,h_I}{} \avr{g,h_I}{} \bar{\chi}_I, \quad
	\pi''_g(f)=\sum_{I\in \mathcal{D}} \avr{f,h_I}{}\avr{g}{I} h_I.
\end{equation}
We may also exclude $\pi'_g$ from our study, since it is the adjoint of $\pi_g$ and therefore has the same norm in the Banach range of exponents. In \cite{MR2381883}, it was shown that 
\[
\|\pi_g\|_{L^p(\mathbb{R})\to L^p(\mathbb{R})}\simeq \|g\|_{BMO_d(\mathbb{R})}, \quad 1<p<\infty,
\]
and in \cite{MR4947121}, the authors established the equivalence
\[
\|\pi_g\|_{L^p(\mathbb{R})\to L^q(\mathbb{R})}\simeq \|g\|_{L^r(\mathbb{R})}, \quad \frac{1}{q}=\frac{1}{p}+\frac{1}{r}, \quad 1<p,r,q<\infty.
\]
Recently, in \cite{MR4939674}, we showed that both results extend to the full range of exponents, provided that Lebesgue spaces are replaced with dyadic Hardy spaces. The operator $\pi''_g$ is simpler to deal with, and one can easily verify that 
\[
\|\pi''_g\|_{H_d^p(\mathbb{R})\to H_d^p(\mathbb{R})}\simeq \|g\|_{L^\infty(\mathbb{R})}, \quad 0<p<\infty.
\]
It is enough to recall the square function characterization of dyadic Hardy spaces, stating that
\[
\|f\|_{H_d^p(\mathbb{R})}\simeq \|S(f)\|_{L^p(\mathbb{R})}, \quad 0<p<\infty, \quad S(f):=\big(\sum_{I\in \mathcal{D}}\avr{f, h_I}{}^2\bar{\chi}_I\big)^{\frac{1}{2}}.
\]	
As expected, the $H_d^p(\mathbb{R})$-to-$H_d^q(\mathbb{R})$ norm of $\pi''_g$ is also comparable to $\|g\|_{H_d^r(\mathbb{R})}$ in the full range of exponents, when $\frac{1}{q}=\frac{1}{p}+\frac{1}{r}$. In the reflexive range, this result was established in \cite{MR1438787} (Theorem 12.2, p.~128), and in Theorem \ref{pi222theorem} we provide a proof of this in the product setting, to which we now turn.\\

Let us recall the family of bi-parameter paraproducts: the nine bilinear forms arising from the product of two functions on the plane, expanded in the rectangular Haar basis. Let
\[
h_I\otimes h_J(x,y)=h_I(x)h_J(y), \quad (x,y)\in\mathbb{R}^2,
\]
be the Haar wavelet associated with the dyadic rectangle $I\times J$, and let $f$ and $g$ be two functions on $\mathbb{R}^2$. For $\varepsilon=(\varepsilon_1, \varepsilon_2)$ with $\varepsilon_1=(\varepsilon_{11},\varepsilon_{12})$ and $\varepsilon_2=(\varepsilon_{21},\varepsilon_{22})\in\{0,1\}^2\backslash\{(0,0)\}$, the bi-parameter paraproduct $\pi^{\varepsilon}$ is defined as the tensor product of $\pi^{\varepsilon_1}$ and $\pi^{\varepsilon_2}$, i.e.,
\begin{equation}\label{defitionofparaproducts}
	\pi^{\varepsilon}(f,g):=\pi^{\varepsilon_1}\otimes\pi^{\varepsilon_2}(f,g)
	= \sum_{I,J\in\mathcal{D}} \avr{f,h_I^{\varepsilon_{11}}\otimes h_J^{\varepsilon_{21}}}{} \avr{g,h_I^{\varepsilon_{12}}\otimes h_J^{\varepsilon_{22}}}{} h_I^{\varepsilon_{11}+\varepsilon_{12}}\otimes h_J^{\varepsilon_{21}+\varepsilon_{22}}.
\end{equation}	
Similarly to \eqref{paraproductexpansioninR}, we have
\[
f(x,y)g(x,y)=\sum_{\substack{\varepsilon=(\varepsilon_1,\varepsilon_2)\\\varepsilon_1,\varepsilon_2\neq (0,0)}} \pi^{\varepsilon}(f,g)(x,y), \quad (x,y)\in \mathbb{R}^2,
\]
which directly follows from \eqref{paraproductexpansioninR} when $f=f_1\otimes f_2$ and $g=g_1\otimes g_2$, and from linearity in the general case. When $\varepsilon_1=\varepsilon_2$, the bilinear form $\pi^{\varepsilon}$ is called an ``unmixed'' paraproduct, since in such cases the cancellative terms $h_I, h_J$ and the non-cancellative terms $\bar{\chi}_I, \bar{\chi}_J$ are separated from each other in all three terms of the sum in \eqref{defitionofparaproducts}. The other forms are referred to as ``mixed'' paraproducts. \\

To discuss boundedness of these forms and the sharpness of the existing results, we take a similar approach as explained before. Considering the symmetries in $f$ and $g$, and in $x$ and $y$, and excluding the adjoint operators, one identifies four different linear operators, listed below.
	\begin{align*}
		&\pi^1_g(f):=\pi^{(0,1)}\otimes \pi^{(0,1)}(f,g)=\sum_{I,J\in\mathcal{D}}\avr{f, \bar{\chi}_I\otimes \bar{\chi}_J}{} \avr{g, h_I\otimes h_J}{}h_I\otimes h_J,\\
		& \pi^2_g(f):=\pi^{(1,0)}\otimes \pi^{(1,0)}(f,g)=\sum_{I,J\in\mathcal{D}}\avr{f,h_I\otimes h_J}{} \avr{g,\bar{\chi}_I\otimes \bar{\chi}_J }{}h_I\otimes h_J\\
		&\pi^3_g(f):=\pi^{(0,1)}\otimes \pi^{(1,0)}(f,g)=\sum_{I,J\in\mathcal{D}}\avr{f, \bar{\chi}_I\otimes h_J}{}\avr{g, h_I\otimes\bar{\chi}_J}{}h_I\otimes h_J\\	
		& \pi^4_g(f)=\pi^{(0,1)}\otimes \pi^{(1,1)}(f,g)=\sum_{I,J\in\mathcal{D}}\avr{f,\bar{\chi}_I\otimes h_J}{}\avr{g, h_I\otimes h_J}{} h_I\otimes \bar{\chi}_J.	
	\end{align*}
In the rest of this note, we discuss the boundedness of the above linear operators acting between different product Hardy spaces. We refer the reader to Theorem~\ref{pi222theorem}, Theorem~\ref{pi3theorem}, and the example presented at the end of Section~3 for a quick overview of the new results of this note.\\

Before proceeding, it is convenient to simplify the notation. In $\mathbb{R}^2$, we use $I$ to denote a dyadic interval on the $x$-axis and $J$ for such intervals on the $y$-axis. From now on, we write $R = I \times J$ for a dyadic rectangle in the plane, set $h_R = h_I \otimes h_J$, and for a function $f$, define $f_R = \langle f, h_R \rangle$. In addition, for a function $f$ on $\mathbb{R}^2$, we let
\[
f_I(y) := \langle f(\cdot, y), h_I \rangle, \qquad f_J(x) := \langle f(x, \cdot), h_J \rangle.
\]

\section{Unmixed Paraproducts}
 Let us begin with the operator $\pi^1_g$, which was the main object of our recent work \cite{MR4952996}, and is given by
 \[
 \pi^1_g(f):=\sum_{R} \avr{f}{R} g_R h_R.
 \]
 There, we have shown that the behavior of this operator and its one-parameter analogue $\pi_g$, defined in \eqref{Defoneparameterparaproducts}, are identical. More precisely, we have shown that
 \begin{align*}
 	&\|\pi^1_g\|_{\dyprodhp\to \dyprodhp} \simeq \dyprodbmon{g}, \quad 0<p<\infty,\\
 	&\|\pi^1_g\|_{\dyprodhp\to \dyprodhq} \simeq \dyprodhrn{g}, \quad \frac{1}{q}=\frac{1}{p}+\frac{1}{r}, \quad 0<p,r,q<\infty,
 \end{align*}
 where in the above $\dyprodhp$ stands for the bi-parameter dyadic Hardy space, (quasi-)normed by
 \[
 \dyprodhpn{f}:=\lprtwonorm{M(f)}, \quad M(f)(x,y):=\sup_{(x,y)\in R}|\avr{f}{R}|, \quad (x,y)\in \mathbb{R}^2,
 \]
 and $\dyprodbmo$ denotes the dyadic product BMO, i.e., the space of functions with
 \[
 \dyprodbmon{f}:=\sup_{\Omega}\big(\frac{1}{|\Omega|}\sum_{R\subseteq\Omega} f_R^2 \big)^\frac{1}{2}<\infty,
 \]
where the supremum is taken over all open sets of finite positive measure. Since it will be clear from the context, we use the same notation $M$ for the dyadic maximal operator on the line and the strong operator defined above. We also refer the reader to \cite{MR4952996} for the reason behind the above equivalences, and to \cite{MR0539351,MR0602392,MR1320508,MR0864369,MR766959} for more on Hardy spaces and BMO in the product setting. See also \cite{MR2174914,MR1892177}. The adjoint of this operator, which is unmixed too, is given by
\[
(\pi^{1}_g)^t(f)=\pi^{(1,1)}\otimes \pi^{(1,1)}(f,g)=\sum_{R} f_R g_R \frac{\chi_R}{|R|},
\]
and therefore it satisfies similar properties, but only in the Banach range of spaces.\\

Since $\pi^1_g$ is well-understood, we turn to the second operator, $\pi^2_g$, which is simply defined by
\[
\pi^2_g(f):=\sum_{R} f_R \avr{g}{R} h_R,
\]
and satisfies the known bounds
\begin{align}
	&\|\pi^2_g\|_{\dyprodhp\to \dyprodhp} \lesssim \|g\|_{L^\infty(\mathbb{R}^2)}, \quad 0<p<\infty, \label{pig2hptohp}\\
	&\|\pi^2_g\|_{\dyprodhp\to \dyprodhq} \lesssim \dyprodhrn{g}, \quad \frac{1}{q}=\frac{1}{p}+\frac{1}{r}, \quad 0<p,r,q<\infty, \label{pig2hptohq}
\end{align}
which can be easily verified after recalling the square function characterization of $\dyprodhp$, i.e., the fact that
\[
\dyprodhpn{f} \simeq \lprtwonorm{S(f)}, \quad 0<p<\infty, \quad S(f):=\Big(\sum_{R} |f_R|^2 \frac{\chi_R}{|R|}\Big)^\frac{1}{2},
\]
\cite{MR0602392,MR1320508}.
Now, the above bounds are simple consequences of the crucial pointwise inequality
\[
S(\pi^2_g(f))\le S(f) M(g),
\]
the square function characterization of product Hardy spaces, and H\"older's inequality \cite{MR2134868,MR2320408, MR3052499,MR2313844}.\\ 

Regarding the sharpness of \eqref{pig2hptohp} and \eqref{pig2hptohq}, we could not find anything in the literature. Thus, as our first task, we show that, just like in the one-parameter setting, both of these inequalities are indeed equivalences. Before doing so, let us briefly recall the notion of Carleson families, the John-Nirenberg lemma in the product setting, and a weak form of the Fefferman-C\'ordoba covering lemma for rectangles.\\

A collection of rectangles (with sides parallel to the axes), $\mathcal{C}$, is called $\Lambda$-Carleson if, for every open set $\Omega$, there holds
\[
\sum_{\substack{R\subseteq \Omega\\ R\in \mathcal{C}}} |R| \le \Lambda |\Omega|.
\]
As is well-known, this condition is equivalent to $\eta$-sparseness with $\eta = \Lambda^{-1}$ \cite{MR3893778,honig2025optimizationalgorithmscarlesonsparse}. The sparseness condition means that every element $R$ of the family has a piece $E_R \subset R$ of density at least $\eta$ such that all these pieces are pairwise disjoint. Then the John-Nirenberg lemma states that an $\eta$-sparse family of rectangles is essentially a disjoint family in the sense that
\[
\lprtwonorm{\sum_{\substack{R\subseteq \Omega\\ R\in \mathcal{C}}}\chi_R} \lesssim_{p,\eta} |\Omega|^{\frac{1}{p}}, \quad 0<p<\infty.
\]
Here, the case $0<p\le 1$, follows simply from H\"older's inequality and we refer the reader to \cite{MR3052499} for the proof of the general case. Finally, it follows from the Fefferman-C\'ordoba covering lemma that, for any collection of rectangles $\mathcal{C}$, it is possible to extract a $\frac{1}{2}$-sparse sub-collection $\mathcal{C}'$ such that
\[
|\cup_{R\in \mathcal{C}} R| \simeq |\cup_{R'\in \mathcal{C}'} R'|.
\]
See \cite{MR0379785,MR0864369, MR3052499} for a detailed exposition of these.
\begin{theorem}\label{pi222theorem}
	For any function $g$, both bounds in \eqref{pig2hptohp} and \eqref{pig2hptohq} are indeed equivalences.
\end{theorem}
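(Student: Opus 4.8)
Note first that $\pi^2_g$ is diagonalized by the Haar basis: it is the Haar multiplier with symbol $R\mapsto\langle g\rangle_R$, so $\pi^2_g(h_{R_0})=\langle g\rangle_{R_0}\,h_{R_0}$ for every dyadic rectangle $R_0$, and hence $\dyprodhpn{\pi^2_g(h_{R_0})}=|\langle g\rangle_{R_0}|\,\dyprodhpn{h_{R_0}}$. Taking $R_0$ with $|\langle g\rangle_{R_0}|$ arbitrarily close to $\sup_R|\langle g\rangle_R|=\|g\|_{L^\infty(\mathbb{R}^2)}$ — possible by the Lebesgue differentiation theorem, or exactly since $g$ is simple — gives $\|\pi^2_g\|_{\dyprodhp\to\dyprodhp}\ge\|g\|_{L^\infty(\mathbb{R}^2)}$, which together with \eqref{pig2hptohp} settles the first equivalence. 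The substance of the theorem is therefore to show that \eqref{pig2hptohq} is also an equivalence, i.e. $\|\pi^2_g\|_{\dyprodhp\to\dyprodhq}\gtrsim\dyprodhrn{g}$.

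For this I would construct a near-extremizer. Recall $\dyprodhrn{g}=\|M(g)\|_{L^r(\mathbb{R}^2)}$, so that with $\Omega_k:=\{M(g)>2^k\}$ the layer-cake formula gives $\dyprodhrn{g}^{\,r}\simeq\sum_k 2^{kr}|\Omega_k|$. The key observation is that $\Omega_k$ is precisely $\bigcup\{R:|\langle g\rangle_R|>2^k\}$; applying the weak Fefferman–Córdoba covering lemma recalled above to this collection produces, for each $k$, a $\tfrac12$-sparse family $\mathcal F_k$ of dyadic rectangles with $|\langle g\rangle_R|>2^k$ on $\mathcal F_k$ and $\big|\bigcup\mathcal F_k\big|\simeq|\Omega_k|$. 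Fixing an exponent $\alpha>0$ (to be chosen at the end) I would take
\[
f:=\sum_{k\in\mathbb{Z}}2^{k\alpha}\sum_{R\in\mathcal F_k}|R|^{1/2}\operatorname{sgn}(\langle g\rangle_R)\,h_R ,
\]
where the signs ensure that the several levels $k$ with $R\in\mathcal F_k$ contribute constructively to the Haar coefficient $f_R$; since $\alpha>0$, this forces $|f_R|\simeq_\alpha 2^{j_R\alpha}|R|^{1/2}$ with $j_R:=\max\{k:R\in\mathcal F_k\}$. Using $|\langle g\rangle_R|>2^{j_R}$ and $S(\pi^2_g f)^2=\sum_R|f_R\langle g\rangle_R|^2\chi_R/|R|$, one gets $S(\pi^2_g f)\gtrsim\psi^{\alpha+1}$ pointwise, where $\psi(x,y):=\sup\{2^{j_R}:R\in\bigcup_k\mathcal F_k,\ (x,y)\in R\}$. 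Since $\{\psi\ge2^k\}\supseteq\bigcup\mathcal F_k$ has measure $\gtrsim|\Omega_k|$ (and $\psi\le M(g)$ trivially), a layer-cake comparison yields $\|\psi\|_{L^s(\mathbb{R}^2)}\simeq\|M(g)\|_{L^s(\mathbb{R}^2)}$ for every $s>0$; hence $\dyprodhqn{\pi^2_g f}\simeq\|S(\pi^2_g f)\|_{L^q(\mathbb{R}^2)}\gtrsim\|\psi\|_{L^{q(\alpha+1)}(\mathbb{R}^2)}^{\alpha+1}\simeq\dyprodhrn{g}^{\,\alpha+1}$, provided we arrange $q(\alpha+1)=r$.

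It remains to bound $\dyprodhpn{f}$ from above, and this is the heart of the matter. Via the square-function characterization $\dyprodhpn{f}\simeq\|S(f)\|_{L^p(\mathbb{R}^2)}$, and grouping rectangles by the value of $j_R$ one gets $S(f)^2\lesssim_\alpha\sum_m 2^{2m\alpha}N_m$ with $N_m:=\sum_{R\in\mathcal F_m}\chi_R$; so the required estimate is $\big\|\sum_m 2^{2m\alpha}N_m\big\|_{L^{p/2}(\mathbb{R}^2)}\lesssim\|M(g)\|_{L^{p\alpha}(\mathbb{R}^2)}^{2\alpha}$. When $0<p\le2$ this follows from the quasi-triangle inequality in $L^{p/2}$, the product John–Nirenberg lemma in the form $\|N_m\|_{L^{p/2}(\mathbb{R}^2)}\lesssim|\Omega_m|^{2/p}$, and the layer-cake sum $\sum_m 2^{pm\alpha}|\Omega_m|\simeq\|M(g)\|_{L^{p\alpha}(\mathbb{R}^2)}^{p\alpha}$. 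When $p>2$, where $L^{p/2}$ is a Banach space, I would instead dualize: test against $0\le h\in L^{(p/2)'}(\mathbb{R}^2)$, use the sparseness of $\mathcal F_m$ to replace $\sum_{R\in\mathcal F_m}\int_R h$ by a constant times $\int_{\Omega_m}M h$, where $M$ is the strong maximal operator (bounded on $L^{(p/2)'}$ since $p>2$), then sum the geometric series $\sum_m 2^{2m\alpha}\chi_{\Omega_m}\simeq M(g)^{2\alpha}$ and close with Hölder. Either way $\dyprodhpn{f}\lesssim\|M(g)\|_{L^{p\alpha}(\mathbb{R}^2)}^{\alpha}=\dyprodhrn{g}^{\,\alpha}$ once $p\alpha=r$. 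The two constraints $q(\alpha+1)=r$ and $p\alpha=r$ are simultaneously met by $\alpha=r/p=r/q-1$ precisely because $\tfrac1q=\tfrac1p+\tfrac1r$; dividing the two displayed estimates then gives $\|\pi^2_g\|_{\dyprodhp\to\dyprodhq}\gtrsim\dyprodhrn{g}$, as wanted.

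I expect the only genuinely delicate point to be the $p>2$ case of the estimate for $\dyprodhpn{f}$ — summing over the scales $m$ without the triangle inequality costing a factor that grows with the number of relevant levels; the duality-plus-strong-maximal-function detour above is designed precisely to circumvent this. Two routine caveats: the test function $f$ is an infinite Haar sum, so one actually works with the truncations $\sum_{|k|\le N}$ and lets $N\to\infty$ (or invokes the limiting argument already used in the introduction), and the inequality is trivial when $\dyprodhrn{g}=\infty$.
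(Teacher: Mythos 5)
Your proposal is correct and follows essentially the same route as the paper's proof: the same test function built from $\tfrac12$-sparse subfamilies of the level sets $\{M(g)>2^k\}$ with weight exponent $r/p$, the same split at $p=2$ (quasi-triangle inequality plus product John--Nirenberg for $0<p\le 2$, duality against $L^{(p/2)'}$ with the strong maximal operator for $p>2$), and the same layer-cake comparison for the lower bound on $S(\pi^2_g f)$. The only differences (the sign factors, the auxiliary function $\psi$, and dividing the two estimates rather than normalizing the operator norm to $1$) are cosmetic.
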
	

\begin{proof}
	The first case follows by plugging $f=h_R$ into the operator and using Lebesgue's differentiation theorem. We now show that \eqref{pig2hptohq} is sharp. To this end, normalize $g$ such that 
	\[
	\|\pi^2_g\|_{\dyprodhp\to \dyprodhq}=1.
	\]
	Then we need to show that $\dyprodhrn{g} \lesssim 1$, and to do so, we build a proper test function by looking at the level sets of $M(g)$.\\ 
	
	For an integer $k$, observe that the level set $\{M(g)>2^k\}$ is a union of rectangles $R$ with the property that $|\avr{g}{R}|>2^k$. Apply the above-mentioned covering lemma to these rectangles and extract a $\frac{1}{2}$-sparse sub-collection $\mathcal{C}_k$, whose total measure is proportional to $|\{M(g)>2^k\}|$. Then consider $\mathcal{C}=\cup_{k\in\mathbb{Z}} \mathcal{C}_k$, and for each rectangle $R \in \mathcal{C}$, let $\lambda(R)$ be the largest $k$ for which $R \in \mathcal{C}_k$. Based on our a priori assumption on $g$, this function has compact support and is bounded. Therefore, each $\mathcal{C}$ is finite and $\lambda(R)$ is well-defined. Next, consider the test function
	\[
	f = \sum_{R \in \mathcal{C}} 2^{t \lambda(R)} |R|^{\frac{1}{2}} h_R, \quad t = \frac{r}{p},
	\] 
	satisfying
	\begin{equation}\label{estimateforhpnormfff}
		\dyprodhpn{f} \lesssim \dyprodhrn{g}^{\frac{r}{p}}.
	\end{equation}
To see this, we estimate the $L^p(\mathbb{R}^2)$-norm of $S(f)$, and in doing so we consider two separate cases. First, suppose $0<p\le 2$, in which case we take advantage of sub-linearity and obtain
\[
S(f)^p \le \sum_{k\in\mathbb{Z}} 2^{ptk} \big(\sum_{R \in \mathcal{C}_k} \chi_R \big)^{\frac{p}{2}},
\]
which, after integrating and applying the John-Nirenberg lemma, gives
\[
\lprtwonorm{S(f)}^p \le \sum_{k\in\mathbb{Z}} 2^{ptk} \int \big(\sum_{R \in \mathcal{C}_k} \chi_R \big)^{\frac{p}{2}} \lesssim \sum_{k\in\mathbb{Z}} 2^{ptk} \big|\cup_{R \in \mathcal{C}_k} R\big|.
\]
Now, recall that $|\cup_{R \in \mathcal{C}_k} R| \simeq |\{M(g) > 2^k\}|$, and apply the layer-cake formula to obtain
\[
\lprtwonorm{S(f)}^p \lesssim \sum_{k\in\mathbb{Z}} 2^{rk} |\{M(g) > 2^k\}| \simeq \lrrtwonorm{M(g)}^r,
\]
which is the claimed inequality. Next, consider the case $2<p<\infty$, in which we appeal to duality. Let $\varphi$ be a function with $\|\varphi\|_{L^{(\frac{p}{2})'}(\mathbb{R}^2)} = 1$, such that
\[
\|S(f)\|_{L^p(\mathbb{R}^2)}^2 = \|S(f)^2\|_{L^{\frac{p}{2}}(\mathbb{R}^2)} = \int S(f)^2 \varphi,
\]
where, as usual, $(\frac{p}{2})'$ is the H\"older conjugate of $\frac{p}{2}$. Then we have
\[
\|S(f)\|_{L^p(\mathbb{R}^2)}^2 = \int \sum_{R \in \mathcal{C}} 2^{2t\lambda(R)} \chi_R \varphi 
= \int \sum_{R \in \mathcal{C}} 2^{2t\lambda(R)} \avr{\varphi}{R} |R| 
\le \sum_{k\in\mathbb{Z}} 2^{2tk} \sum_{R \in \mathcal{C}_k} |\avr{\varphi}{R}| |R|.
\]
At this point, recall that each collection $\mathcal{C}_k$ is $\frac{1}{2}$-sparse, and thus for each rectangle $R \in \mathcal{C}_k$, we may find a subset $E_R \subset R$ with $|R| \le 2 |E_R|$, such that the sets $E_R$ are disjoint. Therefore, we may continue to estimate the last term by
\[
\|S(f)\|_{L^p(\mathbb{R}^2)}^2 \lesssim \sum_{k\in\mathbb{Z}} 2^{2tk} \sum_{R \in \mathcal{C}_k} |\avr{\varphi}{R}| |E_R| 
\le \sum_{k\in\mathbb{Z}} 2^{2tk} \int_{\cup_{R \in \mathcal{C}_k} E_R} M(\varphi) 
\le \sum_{k\in\mathbb{Z}} 2^{2tk} \int_{\{M(g) > 2^k\}} M(\varphi),
\]
which implies that
\[
\|S(f)\|_{L^p(\mathbb{R}^2)}^2 \lesssim \int M(\varphi) M(g)^{2t} 
\le \|M(\varphi)\|_{L^{(\frac{p}{2})'}(\mathbb{R}^2)} \|M(g)\|_{L^r(\mathbb{R}^2)}^{\frac{2r}{p}} 
\lesssim \|M(g)\|_{L^r(\mathbb{R}^2)}^{\frac{2r}{p}},
\]
again yielding the claimed inequality. In the above, we used the boundedness of the strong maximal operator $M$ on $L^{\frac{p}{2}}$, which is allowed since $\frac{p}{2} > 1$.\\

Next, we apply the operator $\pi^2_g$ to $f$ and obtain
\[
F = \pi^2_g(f) = \sum_{R \in \mathcal{C}} 2^{t \lambda(R)} \avr{g}{R} |R|^{\frac{1}{2}} h_R,
\]
which has two properties. 
First,
\[
S(F) \ge 2^{t \lambda(R)} |\avr{g}{R}| \chi_R, \quad R \in \mathcal{C},
\]
implying that
\begin{equation}\label{firstpropertyofFFF}
	S(F)(x,y) > 2^{(1+t)k}, \quad (x,y) \in \cup_{R \in \mathcal{C}_k} R, \quad k \in \mathbb{Z}.
\end{equation}
Second, since $F = \pi^2_g(f)$ and we have established that $\dyprodhpn{f} \lesssim \dyprodhrn{g}^{\frac{r}{p}}$, we must have
\begin{equation}\label{secondpropertyofFFF}
	\|S(F)\|_{L^q(\mathbb{R}^2)} \lesssim \dyprodhrn{g}^{\frac{r}{p}}.
\end{equation}
Therefore, from \eqref{firstpropertyofFFF} and the layer-cake formula, we obtain
\[
\int M(g)^r \simeq \sum_{k \in \mathbb{Z}} 2^{rk} |\{M(g) > 2^k\}| \simeq \sum_{k \in \mathbb{Z}} 2^{rk} |\cup_{R \in \mathcal{C}_k} R| \le \sum_{k \in \mathbb{Z}} 2^{rk} |\{S(F) > 2^{(1+t)k}\}|,
\]
which, after recalling that $1+t = \frac{r}{q}$, implies
\[
\int M(g)^r \lesssim \int S(F)^q.
\]
Finally, combining this with \eqref{secondpropertyofFFF} yields
\[
\lrrtwonorm{M(g)}^{\frac{r}{q}} \lesssim 	\|S(F)\|_{L^q(\mathbb{R}^2)}\lesssim \dyprodhrn{g}^{\frac{r}{p}},
\]
which is exactly $\dyprodhrn{g} \lesssim 1$, the desired result. The proof is now complete.
\end{proof}
\section{Mixed Paraproducts}

Now we turn our attention to the last two operators, $\pi^3_g$ and $\pi^4_g$, which are of mixed type. The analysis of such operators naturally leads one to consider the mixed square-maximal or maximal-square operators of the form
\[
S_2 M_1(f)(x,y) := \Big(\sum_{J \in \mathcal{D}} M(f_J)^2(x) \bar{\chi}_J(y)\Big)^{\frac{1}{2}}, 
\quad 
M_1 S_2(f)(x,y) := \sup_{x \in I} \Big(\sum_{J \in \mathcal{D}} |\avr{f_J}{I}|^2 \bar{\chi}_J(y)\Big)^{\frac{1}{2}},
\]
and, quite similarly,
\[
S_1 M_2(g)(x,y) := \Big(\sum_{I \in \mathcal{D}} M(g_I)^2(y) \bar{\chi}_I(x)\Big)^{\frac{1}{2}}, 
\quad 
M_2 S_1(g)(x,y) := \sup_{y \in J} \Big(\sum_{I \in \mathcal{D}} |\avr{g_I}{J}|^2 \bar{\chi}_I(x)\Big)^{\frac{1}{2}}.
\]
It is then not hard to see, and it is well-known, that each of these mixed operators gives another characterization of $\dyprodhp$ \cite{MR2134868,MR2320408,MR2313844,MR3052499}. This means that for $0<p<\infty$, the $L^p(\mathbb{R}^2)$-norm of each of the four functions above is comparable to the $\dyprodhp$-norm of $f$ and $g$, respectively. Let us briefly sketch the proof of these equivalences, say for $f$. First, note that the pointwise inequality 
\[
M_1 S_2(f) \le S_2 M_1(f),
\]
implies
\[
\|M_1 S_2(f)\|_{L^p(\mathbb{R}^2)} \le \|S_2 M_1(f)\|_{L^p(\mathbb{R}^2)}.
\]	
To see why
\[
\|S_2 M_1(f)\|_{L^p(\mathbb{R}^2)} \lesssim \|S(f)\|_{L^p(\mathbb{R}^2)},
\]
fix $y$ and observe that the vector-valued Fefferman-Stein inequality for the maximal operator on the line, $M$, together with the square function characterization of $L^p(\mathbb{R})$, implies that the operator $S_2 M_1$ is bounded on all $L^p(\mathbb{R})$ with $1<p<\infty$. This, combined with Fubini's theorem, establishes the above inequality for $1<p<\infty$. The case $0<p\le 1$ then follows from the atomic decomposition of $\dyprodhp$. To complete the chain of equivalences, one has to show that
\[
\|S(f)\|_{L^p(\mathbb{R}^2)} \lesssim \|M_1 S_2(f)\|_{L^p(\mathbb{R}^2)},
\]	
which, after using Fubini's theorem and fixing $y$ again, follows from the equivalence of the square function and maximal characterizations of $H_d^p(\mathbb{R}, l^2)$.	
\subsection{The Operator $\pi^3_g$}	
Now, we continue with the boundedness properties of the first mixed paraproduct, $\pi^3_g$, given by
\[
\pi^3_g(f) := \sum_{I,J \in \mathcal{D}} \avr{f_J}{I} \avr{g_I}{J} h_I \otimes h_J.
\]
For this operator, just like for $\pi^1_g$ and $\pi^2_g$, there holds that
\begin{equation}\label{Holdersforpi333}
	\|\pi^3_g\|_{\dyprodhp \to \dyprodhq} \lesssim \dyprodhrn{g}, \quad \frac{1}{q} = \frac{1}{p} + \frac{1}{r}, \quad 0<p,r,q<\infty,
\end{equation}
which is a simple consequence of the pointwise inequality
\[
S(\pi^3_g(f)) \le S_2 M_1(f) M_2 S_1(g),
\]
and the above-mentioned mixed characterizations of product Hardy spaces \cite{MR2134868,MR2320408,MR2313844,MR3052499}. Regarding this it is natural to ask:
\begin{question}
	Is it true that \eqref{Holdersforpi333} is an equivalence in the full range of exponents?
\end{question}

Theorem \ref{pi3theorem} answers this question in the positive in the reflexive range and provides a lower bound in the general case. However, the more intricate structure of the level sets of the mixed operators involved, and their interrelations, leaves the case $0<r\leq 1$ open. Nevertheless, the same theorem shows that for all $0<p<\infty$, the operator norm of $\pi^3_g$ on $\dyprodhp$ is comparable to a mixed-type norm of $g$, i.e.
\[
\Big\| \| g(x,y) \|_{BMO_d(\mathbb{R}, dx)} \Big\|_{L^{\infty}(\mathbb{R}, dy)},
\]
which is simply the essential supremum of the $BMO_d(\mathbb{R})$-norm of the horizontal slices of $g$ (a similar notation is used for a mixture of $H^r$ and $L^r$ norms in \eqref{thesecondcasssss}). See \cite{MR2134868,MR2313844}, with the above quantity replaced by the $L^{\infty}(\mathbb{R}^2)$-norm. We will establish this using the atomic decomposition theorem from \cite{MR4952996}. To this end, let us recall some simple but useful notions.\\

In the plane, a family of measurable sets $\Omega_i$, with $i=0,1,2,\ldots$, is called contracting if
\[
\Omega_{i+1} \subset \Omega_i, \quad |\Omega_{i+1}| \le \frac{1}{2} |\Omega_i|, \quad i=0,1,2,\ldots.
\]
It is then easy to see that the maximal operator associated with such a family,
\[
m(g)(z) := \sup_{\substack{z \in \Omega_i \\ i \ge 0}} \avr{|g|}{\Omega_i},
\]
is bounded on $L^p(\mathbb{R}^2)$ for $1 < p \le \infty$.\\ 

Next, recall that in the product setting a function $f$ is called an $L^s$-atom ($\max(1,p) < s < \infty$) supported on $\Omega$ if
\[
f = \sum_{R \subseteq \Omega} f_R h_R, \quad \|f\|_{L^s(\mathbb{R}^2)} \le |\Omega|^{\frac{1}{s}}.
\]
It is then simple to see that $\dyprodhpn{f} \lesssim |\Omega|^{\frac{1}{p}}$. Finally, for any function $f$ and any $0 < p < \infty$, one may find a contracting family of open sets $\{\Omega_i\}$ and $L^s$-atoms $f_i$ supported on $\Omega_i$ such that
\begin{equation}\label{atomicdecompostion}
	f = \sum_i a_i f_i, \quad \dyprodhpn{f} \simeq \Big(\sum_i a_i^p |\Omega_i| \Big)^{\frac{1}{p}}.
\end{equation}
Also, when $f$ is finite linear combination of rectangular Haar functions this sum is finite. See \cite{MR4952996} for the proof. In order to use this atomic decomposition we need to prove a simple lemma which is very useful when working with ``local operators''.

\begin{lemma}\label{locallemma}
	Let $T$ be a linear operator that is \emph{local} in the sense that it maps $L^s$-atoms supported on $\Omega$ into $L^q$-atoms supported on the same set $\Omega$ ($1<s,q<\infty$). Then $T$ is bounded on $\dyprodhp$ for $0<p<q$. Moreover, the same conclusion holds for $1<p<q$, provided that for any $L^s$-atom $f$ supported on $\Omega$, $T(f)$ is supported on $\Omega$ (not necessarily an atom) and satisfies
	\[
	\|T(f)\|_{L^q(\mathbb{R}^2)} \le |\Omega|^{\frac{1}{q}}.
	\]
\end{lemma}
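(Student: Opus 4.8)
The plan is to run the atomic decomposition \eqref{atomicdecompostion} through $T$ and estimate the pieces, splitting the argument at $p=1$. Given $f$, write $f=\sum_i a_if_i$ with $f_i$ an $L^s$-atom supported on $\Omega_i$, the family $\{\Omega_i\}$ contracting, and $\dyprodhpn{f}\simeq(\sum_i a_i^p|\Omega_i|)^{1/p}$; under the paper's simplicity assumption this sum is finite, so $Tf=\sum_i a_iTf_i$ by linearity. (If the given $s$ is inconveniently small I would enlarge it: an $L^{s'}$-atom with $s'\ge s$ is also an $L^s$-atom by H\"older, so the decomposition's atoms may be taken in $L^s$ with $s>\max(1,p)$.) For $0<p\le1$ the estimate is then immediate: each $Tf_i$ is an $L^q$-atom on $\Omega_i$ and $q>1=\max(1,p)$, so the quoted atom bound gives $\dyprodhpn{Tf_i}\lesssim|\Omega_i|^{1/p}$; since $M$ is sublinear, $\dyprodhpn{\cdot}=\lprtwonorm{M(\cdot)}$ is $p$-subadditive for $p\le1$, whence $\dyprodhpn{Tf}^p\le\sum_i a_i^p\dyprodhpn{Tf_i}^p\lesssim\sum_i a_i^p|\Omega_i|\simeq\dyprodhpn{f}^p$.

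The range $1<p<q$ is the real content, because here $\dyprodhpn{\cdot}\simeq\lprtwonorm{\cdot}$ is a genuine norm and the naive triangle inequality loses (it replaces the $\ell^p$ sum $(\sum a_i^p|\Omega_i|)^{1/p}$ by the larger $\ell^1$ sum $\sum a_i|\Omega_i|^{1/p}$). To recover I would exploit the geometric decay $|\Omega_{i+1}|\le\tfrac12|\Omega_i|$. Note that $Tf$ is supported on $\Omega_0$, and on the layer $\Omega_i\setminus\Omega_{i+1}$ only the terms with $j\le i$ survive (the others are supported in $\Omega_{i+1}$), so $Tf=\sum_{j\le i}a_jTf_j$ there; H\"older with exponent $q/p>1$ together with $\|Tf_j\|_{L^q}\le|\Omega_j|^{1/q}$ yields
\[
\int_{\Omega_i\setminus\Omega_{i+1}}|Tf|^p\le\Big\|\sum_{j\le i}a_jTf_j\Big\|_{L^q}^{p}\,|\Omega_i|^{1-p/q}\le\Big(\sum_{j\le i}a_j|\Omega_j|^{1/q}\Big)^{p}|\Omega_i|^{1-p/q}.
\]
Writing $d_j=a_j|\Omega_j|^{1/q}$ and $\nu_i=|\Omega_i|^{1-p/q}$—so that $\nu_i\le\rho^{\,i-j}\nu_j$ for $i\ge j$ with $\rho=2^{-(1-p/q)}<1$—and summing over $i$, this gives $\|Tf\|_{L^p}^p\le\sum_i\nu_i\big(\sum_{j\le i}d_j\big)^p$. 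Observe that this step used only that $Tf_i$ is supported on $\Omega_i$ with $\|Tf_i\|_{L^q}\le|\Omega_i|^{1/q}$, which is exactly the weaker ``moreover'' hypothesis.

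It then remains to bound $\sum_i\nu_i\big(\sum_{j\le i}d_j\big)^p$ by $\sum_j\nu_jd_j^p=\sum_j a_j^p|\Omega_j|\simeq\dyprodhpn{f}^p$; this is a weighted discrete Hardy inequality that holds precisely because the $\nu_i$ decay geometrically. Concretely I would insert the factors $\rho^{\pm(i-j)\theta}$ with a small $\theta\in(0,1/p)$, apply H\"older in the inner sum to get $\big(\sum_{j\le i}d_j\big)^p\lesssim\sum_{j\le i}d_j^p\rho^{(j-i)\theta p}$, swap the order of summation, and collapse the geometric series $\sum_{i\ge j}\nu_i\rho^{(j-i)\theta p}\le\nu_j\sum_{k\ge0}\rho^{k(1-\theta p)}\lesssim\nu_j$. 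This produces $\|Tf\|_{L^p}\lesssim\dyprodhpn{f}$, i.e. $\dyprodhpn{Tf}\lesssim\dyprodhpn{f}$, using $\dyprodhpn{\cdot}\simeq\lprtwonorm{\cdot}$ for $p>1$ (Lebesgue differentiation plus $L^p$-boundedness of the strong maximal operator). Since the stronger hypothesis of the first assertion implies in particular $\|Tf_i\|_{L^q}\le|\Omega_i|^{1/q}$, the same computation covers $1<p<q$ there as well, and together with the $p\le1$ case it yields boundedness on $\dyprodhp$ for all $0<p<q$.

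The one genuinely delicate point is the $p>1$ summation: the conclusion rests on combining the geometric smallness of the contracting family with the gain $1-p/q>0$ coming from $p<q$, and it fails if either ingredient is dropped. Everything else—the $p\le1$ argument, the atom-to-$\dyprodhp$ estimate, the identification $\dyprodhp=L^p$ for $p>1$, and the finiteness of the decomposition—is either quoted from the excerpt or routine.
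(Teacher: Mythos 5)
Your proof is correct. The $0<p\le 1$ half is essentially identical to the paper's (atomic decomposition plus $p$-subadditivity of the quasi-norm), but for $1<p<q$ you take a genuinely different route. The paper dualizes: it pairs $T(f)$ with $\varphi\in L^{p'}$, applies H\"older on each $\Omega_i$ to reduce to $\sum_i |a_i|\avr{|\varphi|^{q'}}{\Omega_i}^{1/q'}|\Omega_i|$, converts $|\Omega_i|$ into $|\Omega_i\setminus\Omega_{i+1}|$ via the contracting property, and finishes with the disjointness of the layers together with the $L^{p'/q'}$-boundedness of the maximal operator $m$ associated with the contracting family (this is where $q'<p'$, i.e.\ $p<q$, enters). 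You instead work on the primal side: you restrict to the layers $\Omega_i\setminus\Omega_{i+1}$, observe that only the atoms with $j\le i$ contribute there, apply H\"older with exponent $q/q$ versus $p$ on each layer, and close the estimate with a weighted discrete Hardy inequality whose convergence rests on the geometric decay $|\Omega_{i+1}|\le\frac12|\Omega_i|$ combined with the gain $1-p/q>0$. Both arguments use exactly the two hypotheses ($p<q$ and the contracting family) at the decisive step, and your layer computation indeed only needs the weaker ``moreover'' hypothesis, as required. The trade-off is that the paper's duality argument is shorter and outsources the summation to the boundedness of $m$, while yours is more elementary and self-contained (no duality, no maximal-function input) at the cost of the $\rho^{\pm(i-j)\theta}$ bookkeeping; your parenthetical remark about enlarging $s$ so that the atomic decomposition applies (an $L^{s'}$-atom with $s'\ge s$ being an $L^s$-atom by H\"older) is a small point the paper glosses over, and it is handled correctly.
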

\begin{proof}
	Take $f\in \dyprodhp$, and without loss of generality assume that $f$, has a finite Haar support. Then apply the atomic decomposition \eqref{atomicdecompostion} and obtain $f_i$'s and $\Omega_i$'s. We have that
	\[
	T(f)=\sum_{i}a_iT(f_i),
	\]
	implying that for $0<p\le1$, there holds
	\[
	\dyprodhpn{T(f)}^p\le \sum_{i}a_i^p\dyprodhpn{T(f_i)}^p\lesssim \sum_{i}a_i^p|\Omega_{i}|\lesssim \dyprodhpn{f}^p,
	\]
which proves the claim. To treat the case $1<p<\infty$, we appeal to duality and pair $T(f)$ with a function $\varphi$ with $\|\varphi\|_{L^{p'}(\mathbb{R}^2)}=1$. Then
\begin{align*}
	&\avr{T(f), \varphi}{}=\sum_{i}a_i\avr{T(f_i), \varphi}{}\le \sum_{i}|a_i| \avr{|T(f_i)|^q}{\Omega_{i}}^\frac{1}{q}\avr{|\varphi|^{q'}}{\Omega_{i}}^\frac{1}{q'}|\Omega_{i}|\le\sum_{i}|a_i|\avr{|\varphi|^{q'}}{\Omega_{i}}^\frac{1}{q'}|\Omega_{i}|\le 2\\
	& \sum_{i}|a_i|\avr{|\varphi|^{q'}}{\Omega_{i}}^\frac{1}{q'}|\Omega_{i}\backslash\Omega_{i+1}|\le \int m(|\varphi|^{q'})^{\frac{1}{q'}}\sum_{i}|a_i|\chi_{\Omega_{i}\backslash\Omega_{i+1}} \lesssim \big(\sum_{i}a_i^p|\Omega_{i}|\big)^{\frac{1}{p}}\lesssim \dyprodhpn{f},
\end{align*}
proving the claim. Note that in the last line we used H\"older's inequality, boundedness of the operator $m$ on $L^{\frac{p'}{q'}}$, with $q'<p'$, as well as disjointness of the sets $\Omega_{i}\backslash\Omega_{i+1}$. The proof is now complete.
\end{proof}

\begin{theorem}\label{pi3theorem}
	For any function $g$, there holds
	\begin{equation}\label{the firstcassss}
		\|\pi^3_g\|_{\dyprodhp\to \dyprodhp} \simeq \left\|\|g(x,y)\|_{BMO_d(\mathbb{R},dx)}\right\|_{L^{\infty}(\mathbb{R}, dy)}, \quad 0<p<\infty.
	\end{equation}
	Also, for $0<p,r,q<\infty$ with $\frac{1}{q}=\frac{1}{p}+\frac{1}{r}$, we have	
	\begin{equation}\label{thesecondcasssss}
		\left\|\|g(x,y)\|_{H_d^r(\mathbb{R}, dx)}\right\|_{L^r(\mathbb{R}, dy)}\lesssim \|\pi^3_g\|_{\dyprodhp\to \dyprodhq} \lesssim \dyprodhrn{g},
	\end{equation}
and therefore when $1<r<\infty$, we have $$\|\pi^3_g\|_{\dyprodhp\to \dyprodhq} \simeq \|g\|_{L^r(\mathbb{R}^2)}.$$	
\end{theorem}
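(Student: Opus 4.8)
The plan is to treat the diagonal estimate \eqref{the firstcassss} and the off-diagonal estimate \eqref{thesecondcasssss} separately, in each case reducing matters to the one-parameter statements for $\pi_g$ recalled in the introduction. I would set $B:=\left\|\|g(x,y)\|_{BMO_d(\mathbb{R},dx)}\right\|_{L^{\infty}(\mathbb{R},dy)}$ and work throughout with the elementary identity
\[
\pi^3_g(f)(x,y)=\sum_{J\in\mathcal{D}}\pi_{g^J}(f_J)(x)\,h_J(y),\qquad g^J(x):=\avr{g(x,\cdot)}{J},
\]
where $\pi_{g^J}$ is the one-parameter paraproduct of \eqref{Defoneparameterparaproducts}; this follows at once from $\avr{g_I}{J}=\langle g^J,h_I\rangle$ and exhibits $\pi^3_g$ as a vector-valued one-parameter paraproduct in $x$ acting on the Haar coefficient sequence $(f_J)_J$ in $y$. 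Two preliminary facts I would record first: Jensen's inequality gives $\|g^J\|_{BMO_d(\mathbb{R})}\le B$ for every $J$, and — using the simplicity of $g$, so that $g(x,\cdot)$ is constant on all sufficiently small dyadic $y$-intervals, together with Lebesgue differentiation — one also has $\sup_{J}\|g^J\|_{BMO_d(\mathbb{R})}=B$.

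For the upper bound in \eqref{the firstcassss} I would reduce to showing $\|\pi^3_g\|_{L^s(\mathbb{R}^2)\to L^s(\mathbb{R}^2)}\lesssim B$ for every $1<s<\infty$. The case $0<p<2$ of the theorem then follows from the instance $s=2$ alone: $\pi^3_g$ is \emph{local} in the Haar sense — if $f$ is Haar-supported in $\Omega$, then $\avr{f_J}{I}\neq0$ forces some $I'\supsetneq I$ with $I'\times J\subseteq\Omega$, hence $I\times J\subseteq\Omega$, so $\pi^3_g(f)$ is again Haar-supported in $\Omega$ — and so, combined with the $L^2$-bound, $\pi^3_g/(CB)$ maps $L^2$-atoms on $\Omega$ to $L^2$-atoms on $\Omega$, whence Lemma \ref{locallemma} applies; the range $2\le p<\infty$ follows since there $\dyprodhp=L^p(\mathbb{R}^2)$. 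The $s=2$ bound is immediate from Parseval and the one-parameter Carleson embedding theorem:
\[
\|\pi^3_g(f)\|_{L^2(\mathbb{R}^2)}^2=\sum_{I,J}\avr{f_J}{I}^2\,\langle g^J,h_I\rangle^2\lesssim\sum_{J}\|g^J\|_{BMO_d}^2\,\|f_J\|_{L^2(\mathbb{R})}^2\le B^2\,\|f\|_{L^2(\mathbb{R}^2)}^2 .
\]
The remaining cases $2<s<\infty$ follow from the fact that $\pi^3_g$ (and its adjoint) is a dyadic bi-parameter paraproduct bounded on $L^2$ with norm $\lesssim B$, via the standard $L^p$-theory for such operators — equivalently, by freezing $y$ in the factorization above and applying the $\ell^2$-valued $L^s$-boundedness of one-parameter dyadic paraproducts with the uniformly bounded symbols $g^J$, together with the partial square-function characterization of $L^s(\mathbb{R}^2)$. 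I expect the range $p>2$, which is the only place this non-elementary input is used, to be the main obstacle on the diagonal.

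For the lower bound in \eqref{the firstcassss} I would test on $f=\varphi\otimes h_J$ with $\varphi$ simple on $\mathbb{R}$: a direct computation gives $\pi^3_g(\varphi\otimes h_J)=\pi_{g^J}(\varphi)\otimes h_J$, and since the bi-parameter square function of a tensor factors one has $\|u\otimes v\|_{\dyprodhp}=\|u\|_{H^p_d(\mathbb{R})}\,\|v\|_{H^p_d(\mathbb{R})}$, so $\|\pi^3_g\|_{\dyprodhp\to\dyprodhp}\ge\|\pi_{g^J}\|_{H^p_d(\mathbb{R})\to H^p_d(\mathbb{R})}$ for every $J$; the one-parameter sharpness $\|\pi_{g^J}\|_{H^p_d\to H^p_d}\simeq\|g^J\|_{BMO_d}$ from \cite{MR4939674} together with $\sup_J\|g^J\|_{BMO_d}=B$ then give $\gtrsim B$. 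As for \eqref{thesecondcasssss}, its upper bound is exactly \eqref{Holdersforpi333}; and once the lower bound is in hand, the $1<r<\infty$ equivalence follows since $\dyprodhr=L^r(\mathbb{R}^2)$ and $\|g(\cdot,y)\|_{H^r_d(\mathbb{R})}\simeq\|g(\cdot,y)\|_{L^r(\mathbb{R})}$ for $r>1$, together with Fubini.

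For the lower bound in \eqref{thesecondcasssss} I would fix $N$ large enough (using simplicity of $g$) that $g(x,\cdot)$ is constant on every dyadic $y$-interval $J$ with $|J|=2^{-N}$, write $\gamma_J(x)$ for that value (so $g^J=\gamma_J$ for such $J$), and — invoking the one-parameter sharpness $\|\pi_h\|_{H^p_d(\mathbb{R})\to H^q_d(\mathbb{R})}\simeq\|h\|_{H^r_d(\mathbb{R})}$ of \cite{MR4939674}, recalling $1+\tfrac rp=\tfrac rq$ — choose for each such $J$ a simple $\phi_J$ on $\mathbb{R}$ with $\|\phi_J\|_{H^p_d(\mathbb{R})}^{p}=\|\gamma_J\|_{H^r_d(\mathbb{R})}^{r}$ and $\|\pi_{\gamma_J}(\phi_J)\|_{H^q_d(\mathbb{R})}\gtrsim\|\gamma_J\|_{H^r_d(\mathbb{R})}^{r/q}$. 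Running $\pi^3_g$ on $f:=\sum_{|J|=2^{-N}}\phi_J(x)\,h_J(y)$, one has $f_J=\phi_J$ for $|J|=2^{-N}$ and $0$ otherwise, so the factorization gives $\pi^3_g(f)=\sum_{|J|=2^{-N}}\pi_{\gamma_J}(\phi_J)(x)\,h_J(y)$, and because the $h_J$ at scale $2^{-N}$ are disjointly supported the bi-parameter square functions split scale by scale:
\[
\|f\|_{\dyprodhp}^{p}=2^{N(\frac p2-1)}\sum_{|J|=2^{-N}}\|\phi_J\|_{H^p_d}^{p},\qquad \|\pi^3_g(f)\|_{\dyprodhq}^{q}\gtrsim 2^{N(\frac q2-1)}\sum_{|J|=2^{-N}}\|\pi_{\gamma_J}(\phi_J)\|_{H^q_d}^{q}.
\]
With the chosen normalization both sums equal $A:=\sum_{|J|=2^{-N}}\|\gamma_J\|_{H^r_d}^{r}$; inserting this into $\|\pi^3_g(f)\|_{\dyprodhq}\le\|\pi^3_g\|_{\dyprodhp\to\dyprodhq}\,\|f\|_{\dyprodhp}$ and using $\tfrac1q-\tfrac1p=\tfrac1r$, every power of $2^N$ cancels except a single factor $2^{N/r}$, so $(2^{-N}A)^{1/r}\lesssim\|\pi^3_g\|_{\dyprodhp\to\dyprodhq}$; since $2^{-N}A=\sum_{|J|=2^{-N}}|J|\,\|\gamma_J\|_{H^r_d(\mathbb{R})}^{r}=\int_{\mathbb{R}}\|g(\cdot,y)\|_{H^r_d(\mathbb{R})}^{r}\,dy$, this is precisely the claimed inequality. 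The delicate point is the bookkeeping: the normalization $\|\phi_J\|_{H^p_d}^p=\|\gamma_J\|_{H^r_d}^r$ is exactly what forces the scale factors to cancel and reproduce the measure-weighted sum $\sum_J|J|\,\|\gamma_J\|_{H^r_d}^r$ rather than an $\ell^q$-combination of slice norms — which is also the structural reason why only this mixed norm, and not $\dyprodhrn{g}$, can be extracted.
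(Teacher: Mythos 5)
Most of your proposal tracks the paper's own argument: the slice identity $\pi^3_g(f)=\sum_J\pi_{g^J}(f_J)\,h_J(y)$, the lower bound in \eqref{the firstcassss} via testing on $\varphi\otimes h_J$ plus Lebesgue differentiation, the $L^2$-plus-locality route through Lemma~\ref{locallemma}, and the lower bound in \eqref{thesecondcasssss} built from one-parameter extremizers normalized by $\|\phi_J\|_{H^p_d}^p=\|\gamma_J\|_{H^r_d}^r$ on disjoint $y$-intervals. (Your single-scale choice $|J|=2^{-N}$ is a mild streamlining of the paper's arbitrary finite disjoint collection followed by differentiation and Fatou; the bookkeeping and the cancellation of the powers of $2^N$ are correct.) The off-diagonal upper bound is indeed just \eqref{Holdersforpi333}, and the $r>1$ identification is routine.

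The genuine gap is the diagonal upper bound for $2\le p<\infty$. Your two proposed justifications are not equivalent, and neither is adequate as stated. The ``standard $L^p$-theory'' for mixed bi-parameter paraproducts (the pointwise bound $S(\pi^3_g f)\le S_2M_1(f)\,M_2S_1(g)$ and the results of \cite{MR2134868,MR2313844}) yields the constant $\|M_2S_1(g)\|_{L^\infty}=\big\|\sup_{y\in J}S(g^J)\big\|_{L^\infty}$, which dominates $\sup_J\|S(g^J)\|_{L^\infty}$ and can be much larger than $B=\sup_J\|g^J\|_{BMO_d}$ (already in one parameter, $\|S(b)\|_{L^\infty}\gg\|b\|_{BMO_d}$ for lacunary-type $b$); so citing that theory does not give \eqref{the firstcassss}. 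The alternative you offer --- the $\ell^2$-valued $L^s$ bound for the family $\{\pi_{g^J}\}$ with \emph{varying} symbols uniformly in $BMO_d$ --- is true, but it is not a formal vector-valued extension of a single operator: it requires uniform weighted bounds plus Rubio de Francia extrapolation (or an equivalent argument), none of which you supply. The paper avoids this entirely by \emph{losing integrability}: it shows that an $L^s$-atom on $\Omega$ is mapped to an $L^q$-atom on $\Omega$ for every $1<q<s<\infty$, using H\"older with $\frac1q=\frac1s+\frac1t$ to split off the factor $\big(\sum_I M(g_I\chi_{\Omega_I})^2\bar\chi_I\big)^{1/2}$, then the Fefferman--Stein inequality in $y$ and the John--Nirenberg lemma in $x$ (the latter is where the normalization $\sum_{I'\subseteq I}g_{I'}(y)^2\le|I|$ enters). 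Since $q$ may be taken arbitrarily large, Lemma~\ref{locallemma} then covers all $0<p<\infty$ with only the scalar $L^2$ Carleson embedding as input. You should either reproduce that atom-to-atom estimate or supply a genuine proof (with references) of the vector-valued paraproduct inequality; as written, the case $p\ge2$ of \eqref{the firstcassss} is not established.
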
	
\begin{proof}
First we treat \eqref{the firstcassss}, and we begin by proving the lower bound for the operator norm. So normalize $g$, and assume that $\|\pi^3_g\|_{\dyprodhp\to \dyprodhp}=1$. Then fix $J$ and take an arbitrary function of the form
\[
f(x,y)=f_J(x)h_J(y),
\]
to which applying the operator yields
\[
\pi^3_g(f)=\sum_{I\in \mathcal{D}}\avr{f_J}{I} \avr{g_I}{J} h_I(x)h_J(y).
\]
Therefore, boundedness of $\pi^3_g$, implies that the family of one-parameter paraproducts
\begin{equation}\label{averaginJ}
	\pi_{g'}(b)=\sum_{I\in \mathcal{D}}\avr{b}{I} \avr{g_I}{J} h_I, \quad g'=\sum_{I\in \mathcal{D}} \avr{g_I}{J} h_I,
\end{equation}
are uniformly bounded in $J$. So we have
\[
\sum_{I'\subseteq I}\avr{g_{I'}}{J}^2\lesssim |I|, \quad I\in\mathcal{D},
\]
which after using Lebesgue's differentiation theorem implies
\[
\sum_{I'\subseteq I}g_{I'}(y)^2\lesssim |I|, \quad I\in\mathcal{D} \quad \text{a.e}\quad y,
\]
as claimed in \eqref{the firstcassss}.\\

Next, we prove the upper bound for $\pi^3_g$, and to this end again normalize $g$, such that
\[
\sum_{I'\subseteq I}g_{I'}(y)^2\le |I|, \quad I\in\mathcal{D} \quad \text{a.e}\quad y,
\]
implying the uniform $L^2(\mathbb{R})$-boundedness of operators in \eqref{averaginJ}, which means that for an arbitrary function of two variables $f$, and each $J$ we have
\[
\sum_{I\in \mathcal{D}}\avr{f_J}{I}^2 \avr{g_I}{J}^2\lesssim \|f_J\|_{L^2(\mathbb{R})}^2.
\]
Summing over $J$ gives 
\[
\|\pi^3_g(f)\|_{L^2(\mathbb{R}^2)}^2=\sum_{I,J\in\mathcal{D}}\avr{f_J}{I}^2 \avr{g_I}{J}^2\lesssim \sum_{J\in \mathcal{D}} \|f_J\|_{L^2(\mathbb{R})}^2=\|f\|_{L^2(\mathbb{R}^2)}^2,
\]
establishing $L^2(\mathbb{R}^2)$-boundedness of $\pi^3_g$.
Next we note that $\pi^3_g$ is a local operator simply because for any dyadic rectangle $R$, and any two dyadic intervals $I,J$, $\avr{h_R,\bar{\chi}_I\otimes h_J}{}\neq 0$, only if $I\times J\subset R$. Therefore, according to Lemma \ref{locallemma} it is enough to show that for any $1<q<s<\infty$, and any $L^s$-atom $f$ with support on $\Omega$, $\pi^3_g(f)$ is an $L^q$-atom. For simplicity assume that $|\Omega|=1$, and for each $I$, and $y$ let 
\[
\Omega_I=\{y: I\times \{y\}\subseteq \Omega\}, \quad \Omega_y=\Omega\cap \mathbb{R}\times \{y\}.
\]
Next, let $\frac{1}{q}=\frac{1}{s}+\frac{1}{t}$ and note that
\begin{align*}
	&S(\pi^3_g(f))(x,y)=\big(\sum_{I\times J\subseteq \Omega}\avr{f_J}{I}^2\avr{g_I}{J}^2\bar{\chi}_I(x)\bar{\chi}_J(y)\big)^{\frac{1}{2}}\le\\
	&\big(\sum_{J\in\mathcal{D}}M(f_J)^2(x)\bar{\chi}_J(y)\big)^{\frac{1}{2}} \big(\sum_{I\in\mathcal{D}}M(g_I\chi_{\Omega_I})^2(y)\bar{\chi}_I(x)\big)^{\frac{1}{2}}.
\end{align*}
Then H\"older's inequality combined with $L^s(\mathbb{R}^2)$-boundedness of $S_2M_1$ gives us
\[
\|S(\pi^3_g(f))\|_{L^q(\mathbb{R}^2)}^t\lesssim\iint\big(\sum_{I\in\mathcal{D}}M(g_I(y)\chi_{\Omega_I})^2(y)\bar{\chi}_I(x)\big)^{\frac{t}{2}}dy dx.
\]
Now fix $x$, and since $t>1$, we may apply Fefferman-Stein inequality and get
\[
\|S(\pi^3_g(f))\|_{L^q(\mathbb{R}^2)}^t\lesssim\iint\big(\sum_{I\in\mathcal{D}}g_I(y)^2\chi_{\Omega_I}(y)\bar{\chi}_I(x)\big)^{\frac{t}{2}}dydx,
\]
which combined with Fubini again implies
\[
\|S(\pi^3_g(f))\|_{L^q(\mathbb{R}^2)}^t\lesssim \int dy\int \big(\sum_{I\subseteq \Omega_y}g_I(y)^2\bar{\chi}_I(x)\big)^{\frac{t}{2}}dx.
\]
Finally, we use our assumption on $g$, and estimate the inner integral by the John-Nirenberg lemma and obtain
\[
\|S(\pi^3_g(f))\|_{L^q(\mathbb{R}^2)}^t\lesssim\int|\Omega_y|dy=|\Omega|=1,
\]
which completes the proof of \eqref{the firstcassss}.\\

Now we turn to \eqref{thesecondcasssss}. Again normalize $g$ such that $\|\pi^3_g\|_{\dyprodhp\to \dyprodhq}=1$, then our task is to show that
\[
\left\|\|g(x,y)\|_{H_d^r(\mathbb{R}, dx)}\right\|_{L^r(\mathbb{R}, dy)}^r\simeq\iint\big(\sum_{I\in\mathcal{D}}g_I(y)^2\bar{\chi}_I(x)\big)^\frac{r}{2}dxdy\lesssim 1.
\]
 To this end, for a fixed $J$ consider the operator $\pi_{g'}$ defined in \eqref{averaginJ} and note that since
\[
\|\pi_{g'}\|_{H_d^p(\mathbb{R})\to H_d^q(\mathbb{R})} \simeq \|g'\|_{H_d^r(\mathbb{R})},
\]
we may find a single variable function $f_J$ such that
\[
\|f_J\|_{H_d^p(\mathbb{R})}^p=\|g'\|_{H_d^r(\mathbb{R})}^r, \quad \|g'\|_{H_d^r(\mathbb{R})}\lesssim \frac{\|\pi_{g'}(f_J)\|_{ H_d^q(\mathbb{R})}}{\|f_J\|_{H_d^p(\mathbb{R})}},
\]
implying that
\[
\|g'\|_{H_d^r(\mathbb{R})}^r\lesssim \|\pi_{g'}(f_J)\|_{ H_d^q(\mathbb{R})}^q.
\]
Therefore, for each $J$ we must have
\begin{equation}
	\int M(f_J)^p(x)dx\lesssim\int\big(\sum_{I\in\mathcal{D}}\avr{g_I}{J}^2\bar{\chi}_I(x)\big)^\frac{r}{2}dx\lesssim \int\big(\sum_{I\in\mathcal{D}}\avr{f_J}{I}^2\avr{g_I}{J}^2\bar{\chi}_I(x)\big)^\frac{q}{2}dx.
\end{equation}
Now, multiply sides of the above inequality to $\chi_J(y)$, sum over an arbitrary finite disjoint collection of intervals $\mathcal{C}$, and integrate in $y$ to get
\begin{align}
	&\iint \sum_{J\in\mathcal{C}} M(f_J)^p(x)\chi_J(y)dxdy\lesssim \int\sum_{J\in\mathcal{C}}\big(\sum_{I\in\mathcal{D}}\avr{g_I}{J}^2\bar{\chi}_I(x)\big)^\frac{r}{2}\chi_J(y)dxdy\label{tripleinequlaity1111}\\
	&\lesssim \iint\sum_{J\in\mathcal{C}} \big(\sum_{I\in\mathcal{D}}\avr{f_J}{I}^2\avr{g_I}{J}^2\bar{\chi}_I(x)\big)^\frac{q}{2}\chi_J(y)dxdy.\label{tripleinequlaity}
\end{align}
Then note that since $\mathcal{C}$ is disjoint we may write the last integrand as
\[
\sum_{J\in\mathcal{C}} \big(\sum_{I\in\mathcal{D}}\avr{f_J}{I}^2\avr{g_I}{J}^2\bar{\chi}_I(x)\big)^\frac{q}{2}\chi_J(y)= \big(\sum_{J\in\mathcal{C}}\sum_{I\in\mathcal{D}}\avr{f_J}{I}^2\avr{g_I}{J}^2\bar{\chi}_I(x)\chi_J(y)\big)^\frac{q}{2},
\]
or equivalently
\[
\sum_{J\in\mathcal{C}} \big(\sum_{I\in\mathcal{D}}\avr{f_J}{I}^2\avr{g_I}{J}^2\bar{\chi}_I(x)\big)^\frac{q}{2}\chi_J(y)=S(\pi^3_g(f))^q(x,y), \quad f(x,y)=\sum_{J\in\mathcal{C}}|J|^{\frac{1}{2}}f_J(x)h_J(y).
\]
And for the same reason, may write the first integrand in \eqref{tripleinequlaity1111} as
\[
\sum_{J\in\mathcal{C}} M(f_J)^p(x)\chi_J(y)=\big(\sum_{J\in\mathcal{C}} M(f_J)^2(x)\chi_J(y)\big)^\frac{p}{2}=S_2M_1(f)^p(x,y).
\]
Plugging these into \eqref{tripleinequlaity1111}, \eqref{tripleinequlaity} and applying boundedness of $\pi^3_g$ gives us
\[
\dyprodhpn{f}^p\lesssim \iint\sum_{J\in\mathcal{C}}\big(\sum_{I\in\mathcal{D}}\avr{g_I}{J}^2\bar{\chi}_I(x)\big)^\frac{r}{2}\chi_J(y)dxdy \lesssim \dyprodhpn{f}^q,
\]
which implies that
\[
\iint\sum_{J\in\mathcal{C}}\big(\sum_{I\in\mathcal{D}}\avr{g_I}{J}^2\bar{\chi}_I(x)\big)^\frac{r}{2}\chi_J(y)dxdy\lesssim 1.
\]
Now, using Fubini first, and since $\mathcal{C}$ is arbitrary, applying  Lebesgue's differentiation theorem together with Fatou's lemma yields
\[
\iint\big(\sum_{I\in\mathcal{D}}g_I(y)^2\bar{\chi}_I(x)\big)^\frac{r}{2}dydx\lesssim 1,
\]
which completes the proof of \eqref{thesecondcasssss}, and the theorem.	
\end{proof}	
\begin{remark}
	A similar argument to the one used above shows that the mixed norm in \eqref{the firstcassss} is stronger than the product BMO norm.
\end{remark}

\subsection{The Operator $\pi^4_g$}	
Finally, we turn to the last operator, $\pi^4_g$, defined by
\[
\pi^4_g(f)=\sum_{I,J\in\mathcal{D}}\avr{f_J}{I}g_{I\times J} h_I\otimes \bar{\chi}_J,
\]	
which makes it very different from the previous ones. The reason is that, unlike the other operators, $\pi^4_g$ and its adjoint are given as an expansion in an overdetermined system rather than a basis for $L^2(\mathbb{R}^2)$. Therefore, one cannot directly estimate the square function of $\pi^4_g(f)$, and has to appeal to duality. Meaning that we must take $f'$, pair it with $\pi^4_g(f)$, and note that
\[
\avr{\pi^4_g(f), f'}{}=\sum_{I,J\in\mathcal{D}}\avr{f_J}{I}\avr{f'_I}{J}g_{I\times J} =\avr{\pi^3_{f'}(f),g}{},
\]
and therefore, from the boundedness of $\pi^3_{f'}$ and duality it follows that
\begin{align}
	&\|\pi^4_g\|_{L^p(\mathbb{R}^2)\to L^p(\mathbb{R}^2)}\lesssim \dyprodbmon{g}, \quad 1< p<\infty,\\
	&\|\pi^4_g\|_{L^p(\mathbb{R}^2)\to L^q(\mathbb{R}^2)}\lesssim \|g\|_{L^r(\mathbb{R}^2)}, \quad \frac{1}{q}=\frac{1}{p}+\frac{1}{r}, \quad 1<p,r,q<\infty.
\end{align}

However, there is another approach, which we explain in the diagonal case. So, fix $1<p<\infty$, and for a function $g$ with finite Haar support, let
\begin{equation}\label{normpppp}
	\|g\|_p:=\|\pi^4_g\|_{L^p(\mathbb{R}^2)\to L^p(\mathbb{R}^2)},
\end{equation}
which is identical to
\[
\|g\|_p=\sup_{\substack{\|f\|_{L^p(\mathbb{R}^2)}=1\\\|f'\|_{L^{p'}(\mathbb{R}^2)}=1}}\Big|\sum_{I,J\in\mathcal{D}}\avr{f_J}{I}\avr{f'_I}{J}g_{I\times J}\Big|.
\]
Then, write the sum as 
\begin{equation}\label{pi4identity}
	\sum_{I,J\in\mathcal{D}}\avr{f_J}{I}\avr{f'_I}{J}g_{I\times J}=\iint \sum_{I,J\in\mathcal{D}}g_{I\times J}|I\times J|^{-\frac{1}{2}}f_J(x)|J|^{-\frac{1}{2}}f'_I(y)|I|^{-\frac{1}{2}}\chi_I(x)\chi_J(y)\, dxdy,
\end{equation}
and let $G(x,y)$ be the ``$\mathcal{D}$ by $\mathcal{D}$'' matrix defined by
\begin{equation}\label{GGGGGG}
	G(x,y)_{I,J}:=g_{I\times J}|I\times J|^{-\frac{1}{2}}\chi_I(x)\chi_J(y), \quad I,J\in\mathcal{D},
\end{equation}
and similarly define the two vectors in $l^2(\mathcal{D})$ by
\[
\vec{f}(x,y)_J:=f_J(x)|J|^{-\frac{1}{2}}\chi_J(y), \quad J\in\mathcal{D}, \quad 
\vec{f'}(x,y)_I:=f'_I(y)|I|^{-\frac{1}{2}}\chi_I(x), \quad I\in\mathcal{D}.
\]
Then note that the integrand in \eqref{pi4identity} can be written as
\[
\sum_{I,J\in\mathcal{D}}G(x,y)_{I,J}\vec{f}(x,y)_J\vec{f'}(x,y)_I = \avr{G(x,y)\vec{f}(x,y),\vec{f'}(x,y)}{},
\]
where $G(x,y)\vec{f}(x,y)$ is understood as the multiplication of a matrix with a vector, and the inner product is in $l^2(\mathcal{D})$. From this point of view, we may bound the right-hand side of \eqref{pi4identity} by
\[
\Big|\sum_{I,J\in\mathcal{D}}\avr{f_J}{I}\avr{f'_I}{J}g_{I\times J}\Big| \le \iint \big|\avr{G(x,y)\vec{f}(x,y),\vec{f'}(x,y)}{}\big|\, dxdy,
\]
which implies that
\begin{align*}
	\Big|\sum_{I,J\in\mathcal{D}}\avr{f_J}{I}\avr{f'_I}{J}g_{I\times J}\Big|
	&\le \iint \|G(x,y)\|_{l^2(\mathcal{D})\to l^2(\mathcal{D})} \|\vec{f}(x,y)\|_{l^2(\mathcal{D})} \|\vec{f'}(x,y)\|_{l^2(\mathcal{D})} \, dxdy \\
	&\le \Big\|\|G(\cdot)\|_{l^2(\mathcal{D})\to l^2(\mathcal{D})}\Big\|_{L^{\infty}(\mathbb{R}^2)} 
	\iint \|\vec{f}(x,y)\|_{l^2(\mathcal{D})} \|\vec{f'}(x,y)\|_{l^2(\mathcal{D})}\, dxdy \\
	&\le \Big\|\|G(\cdot)\|_{l^2(\mathcal{D})\to l^2(\mathcal{D})}\Big\|_{L^{\infty}(\mathbb{R}^2)} 
	\Big\|\|\vec{f}(\cdot)\|_{l^2(\mathcal{D})}\Big\|_{L^p(\mathbb{R}^2)} 
	\Big\|\|\vec{f'}(\cdot)\|_{l^2(\mathcal{D})}\Big\|_{L^{p'}(\mathbb{R}^2)}.
\end{align*}
Then note that, since
\[
\Big\|\|\vec{f}(\cdot)\|_{l^2(\mathcal{D})}\Big\|_{L^p(\mathbb{R}^2)} \simeq \|f\|_{L^p(\mathbb{R}^2)}, \quad 
\Big\|\|\vec{f'}(\cdot)\|_{l^2(\mathcal{D})}\Big\|_{L^{p'}(\mathbb{R}^2)} \simeq \|f'\|_{L^{p'}(\mathbb{R}^2)},
\]	
we obtain
\begin{equation}\label{thenewbound}
	\|g\|_p \lesssim \Big\|\|G(\cdot)\|_{l^2(\mathcal{D})\to l^2(\mathcal{D})}\Big\|_{L^{\infty}(\mathbb{R}^2)},
\end{equation}	
a new bound for $\|g\|_p$.\\

Now, if $g$ has a tensorial form $g=b\otimes c$, then we simply have
\[
\|G(x,y)\|_{l^2(\mathcal{D})\to l^2(\mathcal{D})}=\big(\sum_{I\in\mathcal{D}}b_I^2\bar{\chi}_I(x)\big)^{\frac{1}{2}}\big(\sum_{J\in\mathcal{D}}c_J^2\bar{\chi}_J(y)\big)^{\frac{1}{2}}=S(b)(x)S(c)(y)=S(g)(x,y).
\]
Also, because of the tensorial structure of the operator we get that
\[
\|g\|_p\simeq \|b\|_{BMO_d(\mathbb{R})}\|c\|_{BMO_d(\mathbb{R})}=\dyprodbmon{g},
\]	
which can be much smaller than 	$\|S(g)\|_{L^{\infty}(\mathbb{R}^2)}$, and  thus the bound in \eqref{thenewbound} is not sharp. Nevertheless, as the next example shows, sometimes this bound is much better than the $\dyprodbmo$-norm.\\

\textbf{Example.}
 For a dyadic interval $I$ in $[0,1]$ with $|I|=2^{-m}$ let, $i(I)=m$. Now fix a large number $n$, let 
\[
g'_{I\times J}=\begin{cases}
	|I\times J|^{\frac{1}{2}}& 0\le i(I), i(J)<n,\\
	0 & \text{otherwise},
\end{cases}
\]
and note that the dyadic rectangles coming from the product of the first $n$ generations of dyadic intervals in $[0,1]$, cover the unit square $n^2$ times. Therefore, we have $\dyprodbmon{g'}\ge n$. Indeed since $g'$ has a tensorial structure we have $\dyprodbmon{g'}= n$. Also, note that changing the signs of coefficients of $g'$ does not change the $\dyprodbmo$-norm of the function. Next, take an $n\times n$ matrix $\{H_{ij}: 0\le i,j<n\}$, with $\pm1$ entries and with small norm. A perfect example is a Hadamard matrix, which is simply an orthogonal matrix with $\pm1$ entries and therefore its operator norm on $\rn$ equipped with the Euclidean norm is exactly $\sqrt{n}$. Such a matrix exists when $n$ is a power of 2. Now we modify $g'$, obtain a new function $g$ defined by
\[
g_{I\times J}=H_{i(I),i(J)}g'_{I\times J}, \quad I,J\in \mathcal{D},
\]
and note that for each $(x,y)$, the matrix $G(x,y)$ is a copy of the matrix $H$. Therefore,
\[
\|G(x,y)\|_{l^2(\mathcal{D})\to l^2(\mathcal{D})}=\sqrt{n},
\]	
 which combined with \eqref{thenewbound} implies
\[
\|g\|_p\lesssim \sqrt{n},
\]	
showing that $\|g\|_p$ is much smaller than $\dyprodbmon{g}$.\\

Interestingly, this example shows that the rectangular Haar basis is not an unconditional basis for the space of functions equipped with the $\|.\|_p$-norm. So, one may wonder whether the remedy is to consider a stronger norm defined as
\[
\|g\|_p':=\sup_{\substack{\|f\|_{L^p(\mathbb{R}^2)}=1\\\|f'\|_{L^{p'}(\mathbb{R}^2)}=1}}\sum_{I,J\in\mathcal{D}}\avr{|f_J|}{I}\avr{|f'_I|}{J}|g_{I\times J}|.
\]
However, a modification of the above example shows that, again, this quantity can be much smaller than the $\dyprodbmo$-norm. To see this, simply let $H$ be the identity matrix, implying that $g_{I\times J}=|I\times J|^{\frac{1}{2}}$ on the first $n$ generations of dyadic squares in $[0,1]^2$. Since they cover the unit square $n$ times, we must have $\dyprodbmon{g}\ge \sqrt{n}$. But now, at each point, the operator norm of the matrix $G(x,y)$ is $1$, and thus $\|g\|_p'\lesssim 1$, again a quantity much smaller than the $\dyprodbmo$-norm. Unfortunately, we were unable to combine this new argument with the previous ones invoking the atomic decomposition, and could not obtain an improved product BMO-type bound for $\pi^4_g$. So let us pose the following question:
\begin{question}
	Is there a reasonable characterization of the operator norm of $\pi^4_g$? In particular, is there a way to combine the above matrix point of view with the atomic decomposition techniques to provide an improved bound for this operator?
\end{question}

It is notable that sometimes the norm defined in \eqref{normpppp} is independent of $p$. First, note that although the operator $\pi^4_g$ destroys cancellation of atoms, it does not enlarge their support. Therefore, Lemma \ref{locallemma} implies that
\[
\|g\|_p\lesssim \|g\|_q, \quad 1<p\le q<\infty.
\]
Then, under the condition that 
\[
g_{I\times J}=g_{J\times I}, \quad I,J\in\mathcal{D},
\]
there holds
\[
\|g\|_p\simeq \|g\|_q, \quad 1<p,q<\infty.
\]
To see this, let $\tilde{f}(x,y)=f(y,x)$, and then we note that
\[
(\pi^4_g)^{t}(\tilde{f})(y,x)=\pi^4_g(f)(x,y),
\]	
which implies $\|g\|_r=\|g\|_{r'}$, for any $1<r<\infty$. Therefore, if we choose $1<r<\min\{p,p',q,q'\}$, we must have $\|g\|_p\simeq \|g\|_q\simeq \|g\|_r$, proving our claim.\\

We conclude this note by mentioning a standard application of the lower bounds in the weak factorization theorems for Hardy spaces. Here, a basic tool is a general atomic decomposition stating that:

\begin{theorem*}\label{atomiclemma}
	Let $V \subset X$ be a bounded subset of a Banach space $X$, symmetric around the origin, and such that for every $l \in X^*$,
	\[
	\|l\|_{X^*} \simeq \sup_{x \in V} |l(x)|.
	\]
	Then, for every $x \in X$, there exists a sequence of positive real numbers $\{\lambda_i\}_{i \geq 1}$ such that
	\[
	x = \sum_{i \geq 1} \lambda_i v_i, \quad \|x\|_X \simeq \sum_{i \geq 1} \lambda_i, \quad v_i \in V, \quad i \geq 1.
	\]
	In the above, the series converges in the norm topology of $X$.
\end{theorem*}
See \cite{MR1225511,MR4338459} for the proof and applications of this result. Now, as an example, the bound
\[
\|\pi^3_g\|_{L^p(\mathbb{R}^2)\to L^q(\mathbb{R}^2)} \simeq \|g\|_{L^r(\mathbb{R}^2)}, \quad \frac{1}{q}=\frac{1}{p}+\frac{1}{r}, \quad 1<p,r,q<\infty,
\]	
can be viewed as
\[
\|g\|_{L^r(\mathbb{R}^2)}\simeq \sup_{\substack{\|f\|_{L^p(\mathbb{R}^2)}=1\\\|f'\|_{L^{q'}(\mathbb{R}^2)}=1}}|\avr{\pi^4_{f'}(f), g}{}|,
\]	
and thus it follows directly from the above theorem that, for any function $g'\in L^{r'}(\mathbb{R}^2)$, there exists a sequence of positive numbers $\{\lambda_i\}_{i\ge 0}$, and pairs of functions $\{f_i, f'_i\}_{i\ge0}$ in the unit balls of $L^p(\mathbb{R}^2)$ and $L^{q'}(\mathbb{R}^2)$, such that
\[
g'=\sum_{i}\lambda_i\pi^4_{f'_i}(f_i), \quad \|g'\|_{L^{r'}(\mathbb{R}^2)}\simeq \sum_{i}\lambda_i.
\]
On the other hand, the converse statement to the above theorem holds as well, and as our earlier example shows
\[
\dyprodbmon{g}\not\simeq\sup_{\substack{\|f\|_{L^p(\mathbb{R}^2)}=1\\\|f'\|_{L^{p'}(\mathbb{R}^2)}=1}}|\avr{\pi^3_{f'}(f), g}{}|.
\]	
Thus, there exists a function $g'\in H_d^1(\mathbb{R}\otimes\mathbb{R})$ for which any factorization of the form
\[
g'=\sum_{i}\lambda_i \pi^3_{f'_i}(f_i), \quad \|g'\|_{H_d^1(\mathbb{R}\otimes\mathbb{R})}\simeq \sum_{i}\lambda_i, \quad \|f_i\|_{L^{p}(\mathbb{R}^2)}=1, \quad \|f'_i\|_{L^{p'}(\mathbb{R}^2)}=1,
\]	
would fail to hold. In particular, $g'$ is not of the form $\pi^3_{f'}(f)$, for any pair of functions in the unit ball of $L^2(\mathbb{R}^2)$. Equivalently, for any two families of functions on line $\{f_J\}_{J\in\mathcal{D}}$ and $\{f'_I\}_{I\in\mathcal{D}}$ satisfying 
\[
\sum_{J \in \mathcal{D}}\|f_J\|_{L^2(\mathbb{R}^2)}^2=1=\sum_{I \in \mathcal{D}}\|f'_I\|_{L^2(\mathbb{R}^2)}^2,
\]
 the Haar coefficients of $g'$ cannot be of the form
\[
g'_{I\times J}=\avr{f_J\otimes f'_I}{I\times J}, \quad I,J\in \mathcal{D}.
\]

	\section*{Acknowledgments}
	I would like to thank the referee for valuable comments on the manuscript.


\begin{thebibliography}{HD82}
		
		
		
		
		\normalsize
		
		
		
		
		
		
		
		
		\bibitem{MR2730492}
		B\'enyi \'A., Maldonado D., Nahmod A. R., and Torres R. H.,
		\emph{Bilinear paraproducts revisited},
		Math. Nachr. 283 (2010), no. 9, 1257--1276.
			
		\bibitem{MR2682821}
		B\'enyi \'A., Maldonado D., and Naibo V.,
		\emph{What is {$\ldots$} a paraproduct?},
		Notices Amer. Math. Soc. 57 (2010), no. 7, 858--860.
		
		\bibitem{MR0539351}
		Bernard A.,
		\emph{Espaces {$H\sp{1}$} de martingales \`a{} deux indices. {D}ualit\'e{} avec les martingales de type ``{BMO}''},
		Bull. Sci. Math. (2), 103 (1979), no. 3, 297--303.

		
		\bibitem{MR2381883}
		Blasco O.,
		\emph{Dyadic {BMO}, paraproducts and {H}aar multipliers}, Interpolation theory and applications, Contemp. Math., vol. 445, 11--18. Amer. Math. Soc., Providence, RI, 2007.
		
		\bibitem{MR2174914}Blasco O., and Pott S., \emph{Dyadic BMO on the bidisk}, Rev. Mat. Iberoamericana {21} (2005), no.~2, 483--510. 
		
		\bibitem{MR0602392}
		Brossard J.,
		\emph{Comparison des ``normes'' {$L\sb{p}$} du processus croissant et de la variable maximale pour les martingales r\'eguli\`eres \`a{} deux indices. {T}h\'eor\`eme local correspondant},
		Ann. Probab. 8 (1980), no. 6, 1183--1188.
		
		\bibitem{MR766959}
		Chang S. Y. A., and Fefferman R.,
		\emph{Some recent developments in {F}ourier analysis and {$H^p$}-theory on product domains},
		Bull. Amer. Math. Soc. (N.S.), 12 (1985), no. 1, 1--43.
		
		\bibitem{MR1225511} Coifman R. R., Lions P. -L., Meyer Y. and Semmes S., \emph{Compensated compactness and {H}ardy spaces}, J. Math. Pures Appl. (9) 72 (1993), no. 3, 247--286.
		
		\bibitem{MR0379785} C\'ordoba A., and  Fefferman R., \emph{A geometric proof of the strong maximal theorem}, Ann. of Math. (2), 102 (1975), no.~1, 95--100.
		
		\bibitem{MR0864369}
		Fefferman R.,
		\emph{Multiparameter {F}ourier analysis}, Beijing lectures in harmonic analysis ({B}eijing, 1984), { Ann. of Math. Stud.}, vol. 112, 47--130, Princeton Univ. Press, Princeton, NJ, 1986.
		
		\bibitem{MR3375865}
		Grafakos L., He D., Kalton N. J., and Masty{\l}o M.,
		\emph{Multilinear paraproducts revisited},
		Rev. Mat. Iberoam. 31 (2015), no. 2, 609--616.
		
		\bibitem{MR1853518}
		Grafakos L., and Kalton N. J.,
		\emph{The {M}arcinkiewicz multiplier condition for bilinear operators},
		Studia Math. 146 (2001), no. 2, 115--156.
		
		\bibitem{MR3893778}
		H\"anninen T. S.,
		\emph{Equivalence of sparse and {C}arleson coefficients for general sets},
		Ark. Mat. 56 (2018), no. 2, 333--339.
		
		\bibitem{MR4947121}
		Hänninen T. S., Lorist E., and Sinko J.,
		\emph{Weighted $L^p\to L^q$-boundedness of commutators and paraproducts in the bloom setting}, J. Math. Pures Appl. (9) 203 (2025), Paper No. 103772, 42 pp. 
		
		\bibitem{honig2025optimizationalgorithmscarlesonsparse}
		Honig E.~A., and Lorist E.,
		\emph{Optimization algorithms for Carleson and sparse collections of sets}, arXiv preprint, {arXiv:2501.07943}, (2025).
		
		\bibitem{MR4338459} Hytönen, T. P., \emph{The $L^p$-to-$L^q$ boundedness of commutators with applications to the Jacobian operator}, J. Math. Pures Appl. (9) \textbf{156} (2021), 351--391.
		
		
		\bibitem{MR2313844}
		Lacey M., and Metcalfe J.,
		\emph{Paraproducts in one and several parameters},
		Forum Math. 19 (2007), no. 2, 325--351.
		
		\bibitem{MR2134868}
		Muscalu C., Pipher J., Tao T., and Thiele C.,
		\emph{Bi-parameter paraproducts},
		Acta Math. 193 (2004), no. 2, 269--296.
	
		\bibitem{MR2320408}
		Muscalu C., Pipher J., Tao T., and Thiele C.,
		\emph{Multi-parameter paraproducts},
		Rev. Mat. Iberoam. 22 (2006), no. 3, 963--976.
		
		\bibitem{MR3052499}
		Muscalu C., and Schlag W.,
		{\em Classical and multilinear harmonic analysis. {V}ol. {II}}, Cambridge Studies in Advanced Mathematics, vol. 138, Cambridge University Press, Cambridge, 2013.
		
		\bibitem{MR1438787}Novikov I. Y., and Semenov E.~M., \emph{Haar series and linear operators}, Mathematics and its Applications, 367, Kluwer Acad. Publ., Dordrecht, 1997.
		
		\bibitem{MR1892177}Pott S., and Sadosky C., \emph{Bounded mean oscillation on the bidisk and operator BMO}, J. Funct. Anal. {189} (2002), no.~2, 475--495. 
		
	    \bibitem{MR4952996} Shaabani S.,\emph{The operator norm of paraproducts on bi-parameter Hardy spaces}, Studia Math. 284 (2025), no.~1, 69--90.
	
	    \bibitem{MR4939674}Shaabani S.,\emph{The operator norm of paraproducts on Hardy spaces}, Math. Ann. 392 (2025), no. 3, 3667–3710.
	
	    \bibitem{MR1320508}
	       Weisz F.,
	{\em Martingale {H}ardy spaces and their applications in {F}ourier analysis}, Lecture Notes in Mathematics, vol. 1568, Springer-Verlag, Berlin, 1994.
	
	
	
	
	
	
	
	
	
	
	
	
	
	
	
	
	
	
	
	
	
	
	
	
	
	
	
	
	
	
	
	
	
	
	
	
	
	\end{thebibliography}

\end{document}